\newtheorem{theorem}{Theorem}[section]
\newtheorem{lemma}[theorem]{Lemma}
\theoremstyle{definition}
\newtheorem{definition}[theorem]{Definition}
\theoremstyle{remark}
\numberwithin{equation}{section}
\newcommand{\CC}{\mathbb{C}}
\newcommand{\ZZ}{\mathbb{Z}}
\newcommand{\End}{\text{End}}
\newcommand{\Aut}{\text{Aut}}
\begin{document}

\title[Normal Forms of endomorphism-valued power series]{Normal Forms of endomorphism-valued power series}

\author{Christopher Keane and Szil\'ard Szab\'o}
\address{Department of Mathematics, Reed College, Portland OR, 97202 and Budapest University of Technology and Economics, Hungary}
\email{chkeane@reed.edu, szabosz@math.bme.hu}
\thanks{ This paper is the product of the Research Opportunities course at the Budapest Semesters in Mathematics program.}

\date{\today}

\begin{abstract}
We show for $n,k\geq1$, and an $n$-dimensional complex vector space $V$ that if an element $A\in\text{End}(V)[[z]]$ has constant term similar to a Jordan block, then there exists a polynomial gauge transformation $g$ such that the first $k$ coefficients of $gAg^{-1}$ have a controlled normal form. Furthermore, we show that this normal form is unique by demonstrating explicit relationships between the first $nk$ coefficients of the Puiseux series expansion of the eigenvalues of $A$ and the entries of the first $k$ coefficients of $gAg^{-1}$.
\end{abstract}

\maketitle
\section*{Introduction}
From Galois theory, we know that polynomials of degree greater than 4 are not solvable by radicals. 
So finding the eigenvalues of a companion matrix of the form
\[
\begin{pmatrix}
0 & 1 & 0 & \cdots & 0\\
0 & 0 & 1 & \cdots & 0\\
0 & 0 & 0 & \ddots & \vdots\\
0 & 0 & 0 & \cdots & 1\\
\beta_{n-1} & \beta_{n-2} & \beta_{n-3} & \cdots & \beta_0
\end{pmatrix}
\]
algebraically in terms of the $\beta_i$'s is not possible. 
If, however, the $\beta_i$'s have expansions $\beta_i(z)$ in terms of some other variable $z$ with $\beta_i(0)=0$, we may then ask to find the coefficients in the series expansions of these eigenvalues in terms of these $\beta_i(z)$'s. 

In this paper we work with a formal power series $A\in\text{End}(V)[[z]]$ whose constant term is a regular nilpotent endomorphism. 
We want to compute the coefficients of the Puiseux expansion of the eigenvalues of $A$, but since this is not possible algebraically 
we search for some normal form obtained via conjugating by an invertible transformation. 
Clearly, conjugating does not modify the eigenvalues of $A$, 
and our aim is to conjugate $A(z)$ to a simple 
shape that allows us to compute explicit relationships between coefficients of the series expansion of the eigenvalues and the coefficients of the conjugate. 

In \cite{szabo} this problem arose in taking an endomorphism of a vector bundle with some fixed local behavior and searching for 
the base locus of its corresponding spectral curves. 
They work with the special case of rank 2 vector bundles $E$ and irregular Higgs fields $\theta(z)$, 
i.e. meromorphic sections of the endomorphism bundle of $E$ tensored by the canonical bundle. 
Specifically, the endomorphism $\theta$ is assumed to have a single pole of order 4 at $z=0$  with leading-order term having non-trivial 
nilpotent part, and the authors show that its polar part may be brought to a simple form up to applying some holomorphic gauge transformations. 
The authors also note that the case of endomorphisms having two distinct eigenvalues is much simpler. 
Let us point out that the rank 2 cases can be tackled algebraically due to the existence of the quadratic formula, 
but that method breaks down in higher-rank cases for the Galois-theoretic reason alluded to above. 
Another observation is that up to a shift of the index of summation, it is equivalent to consider power series or 
Laurent series with a fixed finite pole order. Therefore, in this paper we content ourselves with working with power series, 
however the role of the pole order (the number of terms in the normal form to be controlled) is played by our parameter $k$. 

Here we cover the general rank $n$ case for endomorphism-valued power series with leading-order term a regular 
nilpotent endomorphism. 
That is, we maintain the assumptions of \cite{szabo}, aside from the pole of order 4 and the rank being equal to 2, 
extending their results to vector bundles of arbitrary rank and an arbitrary number of terms in the expansion of the 
endomorphism, by presenting existence and uniqueness 
statements for the normal form of endomorphism-valued power series. This has the same consequence as in \cite{szabo} 
concerning the base locus of generic irregular Higgs bundles with regular nilpotent leading-order term. 

This question is significantly more involved if the constant coefficient of $A$ is a regular matrix with more than one eigenvalue, and even more so if the constant coefficient of $A$ is not regular. The next step we would take to obtain future results would be to examine the case of the constant term of $A$ being regular with more than one eigenvalue.
\section{Preliminaries: Endomorphisms, Gauge Transformations, Puiseux Series}
In this section we describe what kinds of endomorphisms and gauge transformations we plan to examine.
\subsection{Constraints on Endomorphisms}
We begin by putting constraints on the endomorphisms we want to examine. We remark that the results in this paper hold over any algebraically closed field of characteristic zero, but we will only be considering vector spaces over $\CC$. Let $V$ be a vector space over $\CC$ of dimension $n$. Suppose that $z$ is a complex variable, and let $A\in\End(V[[z]])$, that is, $A$ has the form
\[
    A(z)=\sum_{m=0}^\infty A_mz^m,\text{ with $A_m\in M_{n,n}(\CC)$}.
\]
We observe $A_0=A(0)$. We also place the following condition of regularity on $A_0$.

\begin{definition}
For a vector space $V$ over an algebraically closed field, an $n\times n$ matrix $A_0$ is \textit{regular} if and only if its Jordan normal form is of the form
\[
J_{d_1}(\lambda_1)\oplus\cdots\oplus J_{d_s}(\lambda_s),
\]
with $i\neq j\implies\lambda_i\neq\lambda_j$, and where each $J_{d_i}(\lambda_i)$ is a Jordan block of size $d_i$ with corresponding eigenvalue $\lambda_i$.
\end{definition}

More abstractly, this is equivalent to considering the space of complex $n\times n$ matrices as a Lie algebra and requiring that the centralizer of $A_0$ has minimal dimension. The importance of this will become clearer later with the discussion of the transformation applied to $A$.
\subsection{Constraints on Gauge Transformations}\label{gtransf}
Consider $g\in\Aut(V)[[z]]$, supposing that $g$ has a power series expansion
\begin{equation*}
g(z)=\sum_{m=0}^\infty g_mz^m,\text{ with $g_m\in M_{n,n}(\CC),g_0\in \text{GL}_n(\CC)$}.
\label{form of B}
\end{equation*}
We call $g$ an ``analytic/formal gauge transformation" (according to whether the radius of convergence of the power-series is $0$ or positive), 
and require that $g_0$ be invertible because we intend to conjugate $A$ by $g$. 
It is a well-known fact about rings of formal power series that an element is invertible if and only if its constant term is invertible. 
Since $g$ is a power series of matrices, this means we must have $g_0\in\text{GL}_n(\CC)$ for $g$ to be invertible.

We turn our attention to the conjugation of $A$ by $g$, and rename it $B$:
\begin{equation}
    g(z)A(z)g^{-1}(z)=B(z)=\sum_{m=0}^\infty B_mz^m.
\end{equation} 
Our first goal is to design $g$ such that we may control any finite number of the matrix coefficients in the conjugation. Because eigenvalues are invariant under conjugation, transforming $A$ into $B$ will make computation of the eigenvalues of $A$ simpler. We obtain the following theorem, which will be restated later as Theorem \ref{normform}:
\begin{theorem}\label{prenormform}
Suppose that $k,n\geq1$, $V$ is an $n$-dimensional vector space over $\CC$, and $A\in\text{End}(V)[[z]]$. 
If $A$ is such that $A_0$ is similar to a Jordan block with eigenvalue 0, then we may construct a polynomial gauge transformation $g$ 
such that $B_0$ is an upper triangular Jordan block of dimension $n$ and the first $k$ coefficients $B_1,\ldots , B_k$ of $gAg^{-1}=B$ 
are matrices with nonzero coefficients only in their $n$'th row.
\end{theorem}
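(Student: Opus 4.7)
The plan is to proceed in two stages: first, apply a constant gauge transformation to put $A_0$ into the upper triangular Jordan normal form $J$ with eigenvalue $0$ (this is possible by hypothesis), and then build up the polynomial gauge $g$ one degree at a time by left-multiplying by factors of the form $I + h_m z^m$, where $h_m$ is chosen to clean up the $m$-th coefficient. So I would construct $g^{(0)} = g_0$ with $g_0 A_0 g_0^{-1} = J$, and then inductively define $g^{(m)} = (I + h_m z^m) g^{(m-1)}$ for $m = 1, \ldots, k$, setting $g = g^{(k)}$. The gauge $g$ is then automatically a polynomial of degree at most $k$.

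For the inductive step, suppose $B^{(m-1)} := g^{(m-1)} A (g^{(m-1)})^{-1}$ has its first $m$ coefficients $B_0 = J, B_1, \ldots, B_{m-1}$ already supported only in the $n$-th row (for the $B_1,\ldots,B_{m-1}$ part). A direct computation of $(I + h_m z^m) B^{(m-1)} (I + h_m z^m)^{-1}$ shows that the coefficients of $z^j$ for $j < m$ are unchanged, while the coefficient of $z^m$ becomes $B_m^{(m-1)} + [h_m, J]$. So the inductive step reduces to the following linear algebra claim: for every $M \in M_n(\CC)$, one can choose $h \in M_n(\CC)$ such that $M - [J, h]$ has nonzero entries only in its $n$-th row.

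The main content of the argument, and the step I expect to require the most care, is this linear algebra claim. I would prove it by a dimension and intersection argument. Let $W \subset M_n(\CC)$ denote the subspace of matrices supported only in the last row; it has dimension $n$. Since $J$ is regular, its centralizer $\ker(\mathrm{ad}_J)$ equals $\CC[J]$, which has dimension $n$, so the image of $\mathrm{ad}_J$ has dimension $n^2 - n$. Therefore it suffices to show $W \cap \mathrm{Image}(\mathrm{ad}_J) = 0$; equivalently, no nonzero polynomial in $J$ lies in $W$. But $J^i$ has a $1$ in position $(1, i+1)$, so a nonzero element $\sum c_i J^i \in \CC[J]$ always has a nonzero entry in its first row, hence cannot lie in $W$. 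This gives $M_n(\CC) = W \oplus \mathrm{Image}(\mathrm{ad}_J)$, and in particular the desired $h$ exists.

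Assembling these ingredients yields the theorem: the constant gauge handles $B_0$, and $k$ applications of the inductive step handle $B_1,\ldots,B_k$. The mild subtlety worth flagging is that left-multiplying by $I + h_m z^m$ at stage $m$ does not disturb the already-normalized coefficients of lower order, which follows from the order-of-vanishing computation above; this is what makes the induction clean and forces $g$ to be polynomial of degree exactly $k$ rather than a general formal series.
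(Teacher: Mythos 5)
Your overall architecture is the same as the paper's: a constant gauge $g_0$ bringing $A_0$ to the Jordan block $J$, followed by factors $I + h_m z^m$ applied for increasing $m$, using the fact that such a factor leaves the coefficients of $z^j$ for $j<m$ untouched and shifts the $z^m$ coefficient by an element of $\mathrm{Image}(\mathrm{ad}_J)$. The reduction to the linear-algebra claim --- every $M$ can be corrected by an element of $\mathrm{Image}(\mathrm{ad}_J)$ so as to land in the last-row subspace $W$ --- is also exactly the paper's reduction. The gap is in your proof of that claim. You assert that $W \cap \mathrm{Image}(\mathrm{ad}_J) = 0$ is ``equivalently, no nonzero polynomial in $J$ lies in $W$,'' and then verify the latter by inspecting the first row of $\sum c_i J^i$. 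But $\CC[J]$ is the \emph{kernel} of $\mathrm{ad}_J$, not its image, and $W \cap \ker(\mathrm{ad}_J) = 0$ is a genuinely different condition from $W \cap \mathrm{Image}(\mathrm{ad}_J) = 0$: for instance $J$ itself lies in $\mathrm{Image}(\mathrm{ad}_J)$ (it equals $[J,\mathrm{diag}(1,2,\ldots,n)]$), and already for $n=2$ the subspace spanned by $E_{11}-E_{22}$ and $E_{21}$ meets $\CC[J]$ only in $0$ while meeting $\mathrm{Image}(\mathrm{ad}_J)$ nontrivially. So the one step that carries the weight of the theorem is not actually proven.

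The claim you need is true, and two short repairs are available. The paper's route is direct computation: writing $C=[J,G]$, each last-row entry satisfies $c_{n,t} = -\sum_{j=1}^{t-1} c_{n-j,t-j}$, so the last row of any element of $\mathrm{Image}(\mathrm{ad}_J)$ is determined by its upper rows; combined with your (correct) count $\dim \mathrm{Image}(\mathrm{ad}_J) = n^2-n$, the image projects isomorphically onto the space of first $n-1$ rows, which is exactly the statement that $M_n(\CC) = W \oplus \mathrm{Image}(\mathrm{ad}_J)$. Alternatively, make your duality idea precise: under the trace pairing, $\mathrm{Image}(\mathrm{ad}_J) = \CC[J]^{\perp}$, and for a nonzero $w \in W$ one computes $\mathrm{tr}(wJ^i) = w_{n,n-i}$, so $w \notin \CC[J]^{\perp}$; note this involves the last \emph{column} of $p(J)$ rather than its first row. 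Either patch closes the gap. A minor further point: the product $\prod_m(I+h_mz^m)g_0$ has degree up to $k(k+1)/2$, not $k$, though all the theorem needs is that it is a polynomial with invertible constant term.
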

The series $B$ will be referred to as ``the normal form" from now on. 
With the existence of this established we move towards our second goal of determining explicit relationships between 
the eigenvalues of $A$ and the entries of the coefficients of $B$. Let us enumerate the possibly non-zero entries of $B_m$ 
from left to right as $b_{mn-n+1}, \ldots , b_{mn}$. 
We obtain the following result, which will be restated later as Theorem \ref{eigvals}:
\begin{theorem}
Let $B$ be the normal form of $A$ as described in Theorem \ref{prenormform}, and suppose that the bottom left coefficient $b_1$ of 
$B_1$ determined by the normal form is nonzero. The eigenvalues of $A$ have a Puiseux expansion
\[
\zeta(z)=\sum_{m=1}^\infty a_mz^{m/n},
\]
and for fixed $s\geq1,$ the first $s$ coefficients $a_1, \ldots ,a_s$ of the Puiseux expansion explicitly determine and are determined 
by the first $s$ entries $b_1, \ldots ,b_s$ of the matrices making up the normal form $B$. 
\end{theorem}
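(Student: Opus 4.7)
The plan is to use the fact that, modulo $z^{k+1}$, the normal form $B(z)$ is literally a companion matrix; I would compute its characteristic polynomial in closed form, substitute the Puiseux ansatz for an eigenvalue, and then read off the claimed bijection by matching coefficients level by level.

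The $j$-th entry of the bottom row of $B(z)$ equals $\sum_{m\geq 1}b_{(m-1)n+j}z^m$, so modulo $z^{k+1}$ the matrix $B(z)$ is the standard companion matrix whose characteristic polynomial is
\[
p(\lambda,z)\;=\;\lambda^n-\sum_{j=1}^{n}\sum_{m\geq 1}b_{(m-1)n+j}\,z^m\,\lambda^{j-1}.
\]
For a fixed target index $s$, we may assume $k\geq\lceil s/n\rceil$ (by Theorem \ref{prenormform}), which places all of $b_1,\dots,b_s$ in the controlled part of $B$; a short linearisation then shows that the uncontrolled tail of $B$ perturbs any eigenvalue only at orders strictly above $z^{s/n}$, hence $a_1,\dots,a_s$ are unaffected. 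Since $b_1\neq 0$, the Newton polygon of $p$ has a single segment of slope $1/n$, and Newton--Puiseux theory produces $n$ roots of the advertised form $\zeta(z)=\sum_{m\geq 1}a_mz^{m/n}$ with leading coefficient $a_1$ an $n$-th root of $b_1$; fix one such branch so that $a_1\neq 0$.

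The main step is to set $w=z^{1/n}$, substitute $\zeta=\sum a_m w^m$ into $p(\zeta,w^n)=0$, and extract the coefficient of $w^{n+s-1}$ for $s=1,2,\dots$. Writing $s-1=(m_0-1)n+(j_0-1)$ with $1\leq j_0\leq n$ picks out the unique place where $b_s$ appears, namely as the $w^{m_0 n}$-coefficient of the $j=j_0$ summand. A direct combinatorial expansion shows that the resulting equation at level $w^{n+s-1}$ has the form
\[
n\,a_1^{n-1}a_s-a_1^{j_0-1}b_s+Q_s(a_1,\ldots,a_{s-1},b_1,\ldots,b_{s-1})=0,
\]
for some explicit polynomial $Q_s$, with the sole exception that at $s=1$ the left-hand side collapses to $a_1^n-b_1$. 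Because $a_1\neq 0$, the coefficients attached to $a_s$ and to $b_s$ are both nonzero, so once $a_1,\dots,a_{s-1}$ (equivalently $b_1,\dots,b_{s-1}$) are known, $a_s$ and $b_s$ determine each other uniquely, and induction on $s$ yields the claimed bijection.

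The main technical obstacle is the combinatorial bookkeeping that justifies the displayed identity, which splits into three claims. First, in the expansion of $\zeta^n$ the unknown $a_s$ appears only in the $n$ multinomial terms with one exponent equal to $s$ and the remaining $n-1$ equal to $1$, producing exactly $n\,a_1^{n-1}a_s$; all other partitions of $n+s-1$ into $n$ positive parts involve only $a_1,\ldots,a_{s-1}$. Second, across the $n$ cross terms $B_j(w)\zeta^{j-1}$ with $B_j(w):=\sum_m b_{(m-1)n+j}w^{mn}$, the entry $b_s$ enters only through $b_s w^{m_0 n}\cdot a_1^{j_0-1}w^{j_0-1}$ inside the $j=j_0$ summand, and no $b_{s'}$ with $s'>s$ can reach level $w^{n+s-1}$ because it first appears at order at least $w^{n+s}$. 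Third, inserting a factor of $a_s$ into any $\zeta^{j-1}$ with $j\geq 2$ forces total degree at least $n+s$, so $a_s$ is absent from every cross term at this level. Once these three degree counts are verified the induction step becomes a one-line linear solve.
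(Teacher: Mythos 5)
Your proposal is correct, and it takes a genuinely different route from the paper. The paper proves this statement by expanding the characteristic polynomial as the product of all $n$ conjugate Puiseux branches $\prod_{i}(\zeta-\zeta_i(\omega^i z^{1/n}))$ and matching that against the companion form; the coefficient of the leading term $a_1^t a_s$ then comes out as a sum of powers of a primitive $n$'th root of unity over partitions distributed among the $n$ factors, and evaluating that sum to $(-1)^t n$ requires a separate combinatorial lemma (Lemma \ref{lemma}, proved by induction on $t$). You instead substitute a \emph{single} branch $\zeta(w)=\sum a_m w^m$ into $p(\zeta,w^n)=0$ and extract the coefficient of $w^{n+s-1}$; the three degree counts you list are exactly right (the $a_s$-term in $\zeta^n$ contributes $n a_1^{n-1}a_s$, the unique appearance of $b_s$ contributes $a_1^{j_0-1}b_s$, and no $a_s$ or $b_{s'}$ with $s'\geq s$ can otherwise reach that level), and since $a_1\neq 0$ the linear solve and induction go through. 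This bypasses the root-of-unity lemma entirely and is the more elementary argument; what the paper's approach buys is the explicit closed form of the leading constant, $(-1)^t n a_1^t$ with $t=n-j_0$, exhibited as a Galois average. Your leading coefficient $n a_1^{n-j_0}$ agrees with the paper's up to the sign convention used in writing the coefficients of the companion characteristic polynomial (the paper's display \eqref{LHS} takes $+$ signs, implicitly absorbing $(-1)$ into the $b$'s), which does not affect the mutual-determination claim. The one place you wave your hands, the claim that the uncontrolled $O(z^{k+1})$ tail of $B$ cannot reach level $w^{n+s-1}$ for $s\leq kn$, is justified by the same degree count ($m'n+j-1\leq n+s-1$ forces $m'\leq k$), so it would be worth one sentence making that explicit.
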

In particular, 
this theorem tells us that for fixed $k$ the normal form $B$ of $A$ is uniquely determined. 
In all cases we assume $A(z)=\sum_{m=0}^\infty A_mz^m$ is such that $A_0$ is similar to a Jordan block. 
Thus we may define $g_0\in\text{GL}_n(\CC)$ such that 
$$
  B_0 = g_0 A_0 g_0^{-1}
$$
has the desired Jordan block form. 
This is a constant transformation, which is notable since the final $g$ will be a finite product of polynomials. 
Specifically, we will build $g$ as a product of $g_0$ introduced above and non-constant factors $h_\ell$ of the form 
\begin{equation*}
h_\ell(z)=I_n+g_\ell z^\ell,
\end{equation*}
where $I_n$ is the $n\times n$ identity matrix and $1\leq \ell\leq k\in\ZZ^+$. 
This is an important point, because it means that $g$ will be a polynomial, hence everywhere convergent, so applying them to $A$ 
will not affect the convergence radius of $A$. 
This means that the portion of our results concerning gauge transformations will apply to rings of power series where convergence 
is a relevant concern. 
Furthermore, since we only consider the terms of $A$ up to the $k$'th degree we will be applying $k$ of these $h_\ell$ transformations, 
so instead of computing an explicit form for $g^{-1}$, we will only need that $h_\ell^{-1}(z)=I_n-g_\ell z^\ell+O(z^{\ell+1})$. 
Then conjugation of $A$ by one of the factors $h_\ell$ looks like:
\begin{align*}
    h_\ell(z)A(z)h_\ell^{-1}(z)&=(I_n+g_\ell z^\ell)\left(\sum_{m=0}^\infty A_mz^m\right)(I_n-g_\ell z^\ell)+O(z^{\ell+1})\\
    &=\left(\sum_{m=0}^{\ell-1}A_mz^m\right)+(A_\ell-[A_0,g_\ell])z^\ell+O(z^{\ell+1}),
\end{align*}
where $[A_0,g_\ell]=A_0g_\ell-g_\ell A_0$ represents the commutator. 
In this manipulation we see that $g$ affects the $\ell$'th term of $A$ without changing the first $\ell-1$ terms. 
This is important because we apply the transformations $I_n-g_\ell z^\ell$ iteratively for $1\leq \ell\leq k$ for $\ell$ \textit{increasing}, ultimately obtaining a polynomial transformation of the form
\begin{equation}\label{gauge}
g(z)=h_k(z)h_{k-1}(z)\cdots h_1(z)g_0=(I_n+g_kz^k)(I_n+g_{k-1}z^{k-1})\cdots(I_n+g_1z)g_0.
\end{equation}
Specifically, considering the map
\begin{align}
    \text{ad}_{A_0}:M_{n,n}(\CC)&\to M_{n,n}(\CC)\nonumber\\
    g_\ell&\mapsto[A_0,g_\ell]=A_0g_\ell-g_\ell A_0\label{commutator},
\end{align}
will tell us about how to construct $g$ to generate a normal form for the conjugated series.
\subsection{Factorization of the Characteristic Polynomial of $A$}

We consider the eigenvalues of endomorphisms in the variable $\zeta$. 
Let $A(z) =  \sum_{m = 0}^\infty A_{m}z^{m}$ be an element of $\text{End}(V)[[z]]$. 
We have that the characteristic polynomial of $A(z)$ has the following form: 
\begin{equation}\label{charpoly}
\chi_{A(z)}(\zeta) = \chi_{A}(z, \zeta) = \det(\zeta I - A(z)) = \zeta^n + a_{1}(z)\zeta^{n-1} + \cdots + a_{n}(z),
\end{equation}
with $a_1,\ldots,a_n\in \CC [[z]]$. We then recall the following particular case of a result attributed to Puiseux and Newton:
\begin{theorem}[Newton-Puiseux]\label{thm:NP}
The characteristic polynomial \eqref{charpoly} factors as follows:
\[
\chi_{A}(w^n,\zeta)=\prod_{i=1}^n(\zeta-\zeta_i(w)),\hspace{0.5cm}\text{with $\zeta_i\in\CC [[w]]$}.
\]
\end{theorem}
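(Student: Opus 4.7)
The plan is to follow the classical Newton polygon proof of the Puiseux theorem, specialized to the present setting. First I would introduce the Newton polygon of $\chi_A(z,\zeta) = \zeta^n + a_1(z)\zeta^{n-1} + \cdots + a_n(z)$ as the lower convex hull of the points $(n-i, v_z(a_i))$ for $0 \le i \le n$, where $v_z$ denotes the $z$-adic valuation and we adopt the convention $a_0 = 1$ so that $(n,0)$ always lies on the boundary. A standard computation then shows that each edge of slope $-p/q$, written in lowest terms and of horizontal length $\ell$ (necessarily a multiple of $q$), corresponds to a packet of $\ell$ roots of $\chi_A$ of $z$-adic valuation $p/q$, and that these roots lie in $\CC[[z^{1/q}]]$.

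Next I would carry out the familiar Newton--Puiseux recursion. For each edge, substitute $\zeta = z^{p/q}(c + \tilde\zeta)$, where $c$ runs over the roots of the ``initial polynomial'' built from those coefficients of $\chi_A$ whose exponents lie on the edge, clear the appropriate power of $z$, and obtain a new polynomial in $(z^{1/q},\tilde\zeta)$ whose Newton polygon is strictly simpler. Iterating finitely many times, each of the $n$ roots of $\chi_A$ gets expressed as a formal power series in $z^{1/N}$, where $N$ is the least common multiple of the denominators $q$ that appear at any stage of the recursion.

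To conclude the statement as written, I would then verify that in the present context $N$ divides $n$, so that the coarser substitution $z = w^n$ already places every root in $\CC[[w]]$. Under the paper's standing assumption that $A_0$ is similar to a Jordan block with eigenvalue $0$---so that $\chi_A(0,\zeta) = \zeta^n$ and every $a_i$ vanishes at $z=0$---combined with the companion-matrix shape afforded by the normal form of Theorem~\ref{prenormform}, the Newton polygon collapses to edges whose slope denominators all divide $n$. The main obstacle I foresee is this last step: the polygon algorithm is classical, but pinning the universal denominator down to exactly $n$, rather than to some arbitrary multiple, is what requires the specific structure of a matrix with regular nilpotent constant term, as opposed to quoting Newton--Puiseux as a black box for a generic polynomial of degree $n$.
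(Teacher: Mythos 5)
Your overall route differs from the paper's: the paper does not re-prove Newton--Puiseux at all, but quotes the factorization from Abhyankar and then reduces the only nontrivial point --- that the ramification index is exactly $n$ rather than some divisor of $n!$ --- to the observation that $\chi_{A(z)}$ is an Eisenstein polynomial in $\zeta$, citing Serre for ``Eisenstein $\Rightarrow$ totally ramified.'' Running the Newton polygon algorithm from scratch, as you propose, is a legitimate and more self-contained alternative, and you correctly locate the crux in the last step. But that last step, as you state it, has a genuine gap: you claim that nilpotency of $A_0$ (so $a_i(0)=0$ for all $i$) together with ``the companion-matrix shape afforded by the normal form'' forces all slope denominators of the polygon to divide $n$. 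This is false. For example, with $n=5$ and $\chi = \zeta^5 + z\zeta^2 + z^2$ --- which arises from a normal form with nilpotent Jordan constant term and last row having entries of order $z^2$ and $z$ in the first and third columns --- the polygon has an edge of slope $-1/2$ and an edge of slope $-1/3$, so the roots lie in $\CC[[z^{1/2}]]$ and $\CC[[z^{1/3}]]$, not in $\CC[[z^{1/5}]]$.

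The missing ingredient is the genericity hypothesis $b_1\neq 0$ (introduced in the paper alongside Theorem \ref{eigvals} and invoked explicitly in its discussion of Theorem \ref{thm:NP}), which is equivalent to saying that the $z$-adic valuation of $a_n(z)=\pm\det A(z)$ equals $1$. Combined with $a_1(0)=\cdots=a_{n-1}(0)=0$ from nilpotency, this makes \eqref{charpoly} Eisenstein in $\zeta$: the Newton polygon is then the single segment from $(0,1)$ to $(n,0)$ of slope $-1/n$, all $n$ roots have valuation $1/n$, and the extension is totally ramified of degree $n$, so every root lies in $\CC[[w]]$ with $w^n=z$. If you add this one sentence identifying $v_z(a_n)=1$ as the decisive condition, your polygon argument closes; without it, the statement you are trying to prove is not even true in the generality you are working in.
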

This version of the theorem is taken from Lecture 12 of \cite{abhyankar}, except for identifying the ramification index as $n$ 
instead of some unspecified divisor of $n!$; this latter identification in turn follows from \cite{Serre}, Chapter I Proposition 17. 
Indeed, according to the assumption $b_1\neq 0$ the $z$-adic valuation of $a_{n}$ is $1$, on the other hand 
the coefficients $a_1(0),\ldots ,a_{n-1}(0)$ clearly vanish as $A_0$ is a nilpotent endomorphism. 
These conditions mean that $\chi_{A(z)}$ is an Eisenstein-polynomial in $\zeta$, thus it is totally ramified, 
i.e. of ramification index $n$. 

For us the above theorem means that we may decompose the characteristic polynomial 
of $A$ into linear factors, with the roots being represented by Puiseux series. 
Furthermore, we will be able to obtain each root of the polynomial by considering all of the conjugates, in the Galois theory sense, 
of a single root, that is, by multiplying $w = z^{1/n}$ by some power of a primitive $n$'th root of unity $\omega$. 
Specifically, after a branch cut we may fix a choice $z^{1/n}$ of $n$'th root of $z$, 
and then all the roots of the characteristic polynomial are expressible in the form
\begin{equation}\label{eq:roots}
 \zeta_i(z)=\sum_{m=1}^\infty a_m (\omega^i z^{1/n})^m
\end{equation}
for $i=0,\ldots,n-1$. Different choices of $z^{1/n}$ only amount to a permutation of the $n$ roots $\zeta_i$.
\section{Existence of the normal form}
In this section we present the construction of a normal form for $A$ where the dimension of the ambient vector space $V$ is an 
arbitrary integer $n \geq 2$. Furthermore, we fix an arbitrary $k\in\ZZ^+$. 
\begin{theorem}\label{normform}
Take $V$ to be a vector space over $\CC$ of dimension $n$, and suppose that $A(z)=\sum_{m=0}^\infty A_m z^m$ is an endomorphism of $V$ such that 
$A_0$ is similar to a Jordan matrix with a single eigenvalue. Then for fixed $k\geq 1$ we may construct a gauge transformation $g$ of the form \eqref{gauge} such that the coefficient $B_0$ of $gAg^{-1}(z)=B(z)=\sum_{m=0}^\infty B_m z^m$ has the form
\[
B_0=
\begin{pmatrix}
        0 & 1 & 0 & 0 & \cdots & 0\\
        0 & 0 & 1 & 0 & \cdots & 0\\
        0 & 0 & 0 & 1 & \cdots & 0\\
        \vdots & \vdots & \vdots & \vdots & \ddots &\vdots\\
        0 & 0 & 0 & 0 & \cdots & 1\\
        0 & 0 & 0 & 0 & \cdots & 0
\end{pmatrix},
\]and the subsequent coefficients have the form
\[
B_\ell=
\begin{pmatrix}
        0 & 0 & \cdots & 0 & 0\\
        0 & 0 & \cdots & 0 & 0\\
        \vdots & \vdots & \ddots & \vdots & \vdots\\
        0 & 0 & \cdots & 0 & 0\\
        b_{n(\ell-1)+1} & b_{n(\ell-1)+2} & \cdots & b_{n\ell-1} & b_{n\ell}\\
\end{pmatrix},
\]
for $1\leq\ell\leq k$.
\end{theorem}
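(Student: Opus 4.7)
The plan is an induction on $\ell$ building $g$ factor-by-factor as in \eqref{gauge}. Since $A_0$ is similar to a nilpotent Jordan block (the only reading of the hypothesis consistent with the prescribed $B_0$), as a base step we pick any $g_0\in\text{GL}_n(\CC)$ realising that similarity, so that $g_0A_0g_0^{-1}=N$ is the shift matrix displayed as $B_0$. Assume inductively that after applying $g_0,h_1,\dots,h_{\ell-1}$ the first $\ell$ coefficients of the partial conjugate already have the prescribed form, with the constant term still equal to $N$; this constancy is automatic because every $h_j$ satisfies $h_j(0)=I_n$. The local expansion already carried out in the excerpt shows that further conjugation by $h_\ell=I_n+g_\ell z^\ell$ leaves the first $\ell$ coefficients untouched and replaces the current $\ell$-th coefficient $\widetilde{A}_\ell$ by $\widetilde{A}_\ell-[N,g_\ell]$. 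Hence the induction advances as soon as $g_\ell$ can be chosen so that $\widetilde{A}_\ell-[N,g_\ell]$ lies in the subspace $\mathcal{L}\subset M_{n,n}(\CC)$ of matrices supported only in the last row.

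The whole argument therefore reduces to the linear-algebraic decomposition
\[
M_{n,n}(\CC)=\text{im}(\text{ad}_N)\oplus\mathcal{L}.
\]
We would prove this by matching dimensions and then verifying trivial intersection. Regularity of $N$ makes its centralizer (i.e.\ $\ker\text{ad}_N$) equal to the $n$-dimensional algebra of upper-triangular Toeplitz matrices, equivalently polynomials in $N$, so $\dim\text{im}(\text{ad}_N)=n^2-n$, which together with $\dim\mathcal{L}=n$ fills out $M_{n,n}(\CC)$. For trivial intersection, a direct computation yields $[N,X]_{ij}=X_{i+1,j}-X_{i,j-1}$ under the boundary conventions $X_{n+1,*}=X_{*,0}=0$; imposing vanishing for every $i<n$ and unwinding the resulting recursion, together with the boundary condition $X_{*,0}=0$, forces $X$ to be upper-triangular and constant along every diagonal, so $X$ is itself a polynomial in $N$ and $[N,X]=0$.

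The main obstacle, or rather the key input, is the complementarity claim above, which is precisely where the regularity of $A_0$ enters in a serious way: without it, $\text{im}(\text{ad}_N)$ would be too small to admit $\mathcal{L}$ as a complement. Everything else -- the inductive bookkeeping, the preservation of $B_0$ across all stages, and the final assembly of $g$ as the polynomial product \eqref{gauge} -- is then routine. At each stage the choice of $g_\ell$ is determined only modulo $\ker\text{ad}_N$, so $g$ itself is far from unique; uniqueness of the resulting normal form $B$ is a distinct statement which will require separate justification.
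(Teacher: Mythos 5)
Your proposal is correct and follows essentially the same route as the paper: build $g$ iteratively from the factors $h_\ell=I_n+g_\ell z^\ell$, use that each such conjugation shifts the $\ell$-th coefficient by an element of $\operatorname{im}(\text{ad}_{B_0})$ without disturbing lower-order terms, and use the structure of $\text{ad}_{B_0}$ to kill the first $n-1$ rows. Your justification of the key linear-algebra step via the decomposition $M_{n,n}(\CC)=\operatorname{im}(\text{ad}_{B_0})\oplus\mathcal{L}$ (dimension count from regularity plus the trivial-intersection computation) is in fact a tighter argument than the paper's, which infers the same fact by writing out the commutator explicitly and observing that its last row is determined by the preceding rows.
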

\begin{proof}
 We want to find a way to conjugate $A$ into $B$ such that $A_0=B_0$ and the subsequent $B_\ell$'s for $1\leq\ell\leq k$ have the indicated form. So we consider the map $\text{ad}_{A_0}:V\to V$ for an arbitrary matrix $G$ given by $G\mapsto[A_0,G]$, with the bracket representing the commutator of $A_0$ and $G$. To examine the image of this map, label the entries of $G$ in the usual way and expand:
\begin{center}\[
    \left[
        \begin{pmatrix}
            0 & 1 & 0 & 0 & \cdots & 0\\
            0 & 0 & 1 & 0 & \cdots & 0\\
            0 & 0 & 0 & 1 & \cdots & 0\\
            \vdots & \vdots & \vdots & \vdots & \ddots &\vdots\\
            0 & 0 & 0 & 0 & \cdots & 1\\
            0 & 0 & 0 & 0 & \cdots & 0
        \end{pmatrix},
        \begin{pmatrix}
            g_{11} & \cdots & g_{1n}\\
            \vdots & \ddots & \vdots\\
            g_{n1} & \cdots & g_{nn}\\
        \end{pmatrix}
    \right]
    =
\]\end{center}
\[
    \begin{pmatrix}
        g_{21} & g_{22}-g_{11} & g_{23}-g_{12} & \cdots & g_{2,n}-g_{1,n-1}\\
        g_{31} & g_{32}-g_{21} & g_{33}-g_{22} & \cdots & g_{3,n}-g_{2,n-1}\\
        \vdots & \vdots & \vdots & \ddots & \vdots\\
        g_{n,1} & g_{n,2}-g_{n-1,1} & g_{n,3}-g_{n-1,2} & \cdots & g_{n,n}-g_{n-1,n-1}\\
        0 & -g_{n,1} & -g_{n,2} & \cdots & -g_{n,n-1}
    \end{pmatrix}.
\]
Name the above matrix $C$, and name the entries in the usual way. Then see that we may write each entry in the last row as follows:
\[
c_{n,t}=-\sum_{j=1}^{t-1}c_{n-j,t-j},
\]
as $t$ ranges from 1 to $n$. That is, each entry in the last row is the negative of the sum of entries along the diagonal up and to the left of $c_{n,t}$. We set $c_{n,1}=0$ by convention. Now although we considered the matrix $G$ to be arbitrary, we may pick the entries of $G$ so that we can make $A_\ell-[A_0,G]$ have a desired form. Specifically, the dependence of the last row of $C$ on the first $n-1$ rows ensures that we can eliminate the first $n-1$ rows of $A_\ell$. This almost certainly affects the last row of $A_\ell$, but this does not matter to us. Thus from the iterative process described at the end of section \ref{gtransf}, we may find a polynomial of the form \eqref{gauge} that we may conjugate $A$ by to turn the $\ell$th coefficient of $B(z)$ into the form
\[
B_\ell=
\begin{pmatrix}
        0 & 0 & \cdots & 0 & 0\\
        0 & 0 & \cdots & 0 & 0\\
        \vdots & \vdots & \ddots & \vdots & \vdots\\
        0 & 0 & \cdots & 0 & 0\\
        b_{n(\ell-1)+1} & b_{n(\ell-1)+2} & \cdots & b_{n\ell-1} & b_{n\ell}\\
\end{pmatrix},
\]
for $1\leq\ell\leq k$. Turning $A_0$ into $B_0$ is much easier, since it is achieved by a constant transformation, and we are assuming that $A_0$ is similar to a matrix of the form $B_0$. 
This is the desired normal form for the first $k$ coefficients of $B$.
\end{proof}
\section{Uniqueness of the normal form}
In this section we again fix an arbitrary $k\in\ZZ^+$ and show that the coefficients $b_i$ for $1\leq i \leq kn$ 
are uniquely determined by the shape of the normal form $B$ and the coefficients $a_1,\ldots ,a_{kn}$ of the Puiseux expansion 
of the eigenvalues of $A$. 
We begin the search for relationships between the series of eigenvalues and the entries of the $B_\ell$'s with a lemma. 
For the remainder of this section we now suppose that $A_0=B_0$ is as in Theorem \ref{normform} and that the first $k$ 
coefficients of $B(z)$ may have nonzero entries only in the $n$'th row.

\begin{lemma}\label{lemma}
Let $t$ be an integer with $n>t\geq1$, and $w_1,\ldots,w_{t + 1} \in\ZZ$ be such that 
\[-n<w_1<0,\hspace{0.5cm} 
0<w_2, \cdots, w_{t+1}<n,\hspace{0.5cm}\sum_{\ell=1}^{t+1}w_\ell=0.\]

Define $\omega$ to be a primitive $n$'th root of unity. Then we have that
\[
\frac{1}{t!}\sum_{\substack{1\leq s_1,\ldots,s_{t+1}\leq n\\s_j\neq s_\ell,\forall \ell\neq j}}\omega^{w_1s_1+\cdots+w_{t+1}s_{t+1}}=(-1)^{t}n.
\]
\end{lemma}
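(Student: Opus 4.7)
My plan is to reduce the distinct-tuple sum to simpler unrestricted sums by applying Möbius inversion on the partition lattice of $\{1,\ldots,t+1\}$, and then to use the sign and size constraints on the $w_\ell$'s to show that only one term in the resulting alternating sum survives.

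For every set partition $\pi$ of $\{1,\ldots,t+1\}$ with blocks $B_1,\ldots,B_k$ and block sums $W_j := \sum_{\ell\in B_j} w_\ell$, I would introduce
\[
U(\pi) := \sum \omega^{w_1 s_1 + \cdots + w_{t+1} s_{t+1}},
\]
where the sum ranges over all tuples $(s_1,\ldots,s_{t+1})\in\{1,\ldots,n\}^{t+1}$ that are constant on each block of $\pi$. This factors as $\prod_{j=1}^{k}\sum_{s=1}^{n}\omega^{W_j s}$, and the standard identity $\sum_{s=1}^{n}\omega^{ws}=n\cdot\mathbf{1}_{n\mid w}$ makes this equal to $n^{k}$ if $n$ divides every $W_j$ and equal to $0$ otherwise. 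Let $T(\pi)$ denote the analogous sum over tuples whose equality pattern is exactly $\pi$; the target sum is then $T(\pi_{\min})$, where $\pi_{\min}$ is the partition into singletons. Since $U(\sigma)=\sum_{\pi\geq\sigma}T(\pi)$, Möbius inversion on the partition lattice yields
\[
T(\pi_{\min}) = \sum_{\pi}\mu(\pi_{\min},\pi)\, U(\pi).
\]

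The step I expect to be the main obstacle is showing that $U(\pi)=0$ for every $\pi$ with at least two blocks. If the index $1$ sits in a singleton block of $\pi$, the corresponding block sum is $w_1\in(-n,0)$, which is not divisible by $n$. Otherwise, since $\pi$ has at least two blocks, some block $B_{j_0}$ avoids the index $1$, hence $B_{j_0}\subseteq\{2,\ldots,t+1\}$, and the hypotheses give
\[
0 < W_{j_0} = \sum_{\ell\in B_{j_0}} w_\ell \leq \sum_{\ell=2}^{t+1} w_\ell = -w_1 < n,
\]
so again $n\nmid W_{j_0}$. This is the crucial observation, combining the telescoping relation $\sum w_\ell=0$ with the size bound $w_1>-n$. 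Consequently only the one-block partition $\pi_{\max}=\{\{1,\ldots,t+1\}\}$ contributes, for which $U(\pi_{\max})=\sum_{s=1}^{n}\omega^{0}=n$; together with the standard value $\mu(\pi_{\min},\pi_{\max}) = (-1)^{t}t!$ of the Möbius function at the top of the partition lattice of a $(t+1)$-element set, this gives $T(\pi_{\min}) = (-1)^{t}t!\,n$. Dividing by $t!$ produces the asserted $(-1)^{t}n$.
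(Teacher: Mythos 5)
Your argument is correct, and it takes a genuinely different route from the paper. The paper proves the lemma by induction on $t$: it isolates the innermost sum over $s_1$, uses the identity $\sum_{s=1}^{n}\omega^{ws}=0$ (for $n\nmid w$) to rewrite the restricted sum over $s_1\notin\{s_2,\ldots,s_{t+1}\}$ as $-\sum_{j=2}^{t+1}\omega^{w_1 s_j}$, and thereby expresses the original sum as a combination of $t$ sums of the same shape in which $w_1$ has been merged with one of the positive weights; after checking that each merged weight system still satisfies the hypotheses (in particular that $w_1+w_\ell$ remains in $(-n,0)$), the induction hypothesis finishes the computation. Your partition-lattice argument replaces this recursion with a single closed-form inclusion--exclusion: the identity $T(\pi_{\min})=\sum_{\pi}\mu(\pi_{\min},\pi)U(\pi)$ is standard, your factorization $U(\pi)=\prod_j\sum_{s=1}^n\omega^{W_j s}$ is right, and your key observation --- that any partition with at least two blocks has a block avoiding the index $1$, whose block sum then lies strictly between $0$ and $-w_1<n$ and so is not divisible by $n$ --- correctly kills every term except the top one, where $\mu(\pi_{\min},\pi_{\max})=(-1)^{t}t!$ and $U(\pi_{\max})=n$ give the answer. (Your preliminary case of $1$ lying in a singleton block is subsumed by the second case, but it is harmless.) The trade-off is that your proof is non-inductive and makes visible exactly where each hypothesis enters --- the bounds on the $w_\ell$ force the vanishing of all proper partitions, and $\sum_\ell w_\ell=0$ produces the surviving factor of $n$ --- at the cost of invoking the Möbius function of the partition lattice, whereas the paper's induction uses nothing beyond the geometric-sum identity but must verify that the hypotheses are preserved under the merging step.
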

\begin{proof}
First, note the following basic identity regarding sums of powers of primitive $n$'th roots of unity:
for any $w \in \mathbb{Z}$ such that $n \nmid w$ we have 
\begin{equation}\label{id}
    \sum_{j=0}^{n-1} \omega^{jw} = \frac{\omega^{wn} - 1}{\omega^w -1} = 0.
\end{equation}
For our application below, let us point out that in the sum of the left-hand side the summation index $j$ may equally be chosen to 
range from $1$ to $n$ without changing the value of the sum, because $\omega^{0w} = \omega^{nw}$. 
Then we proceed by induction on $t$. Starting with $t=1$, we see that we must have $w_2=-w_1$, since $w_1<0$, and $w_1+w_2=0$. Then see that
\[
\frac{1}{1!}\sum_{\substack{1\leq s_1,s_2\leq n\\s_1\neq s_2}}\omega^{w_1(s_1-s_2)},
\]
and relabelling $u=(s_1-s_2)\bmod n$ gives
\[
n \cdot \frac{1}{1!}\sum_{u=1}^{n-1}\omega^{w_1u}=n\cdot\frac{1}{1!}\cdot(-1)=(-1)^1 \cdot n,
\]
using (3.1) and observing each $u$ is obtained in $n$ possible ways. So the base case is proven.

Now suppose that the claim holds for $t-1\geq1$. For $t$, we then have
\[
\frac{1}{t!}\sum_{\substack{1\leq s_1,\ldots,s_{t+1}\leq n\\s_j\neq s_\ell,\forall \ell\neq j}}\omega^{w_1s_1+\cdots+w_{t+1}s_{t+1}}
=
\frac{1}{t!}\sum_{\substack{1\leq s_2,\ldots,s_{t+1}\leq n\\s_j\neq s_\ell,\forall \ell\neq j}}\omega^{w_2s_2+\cdots+w_{t+1}s_{t+1}}\left(\sum_{\substack{s_1=1\\s_1\notin\{s_2,\ldots,s_{t+1}\}}}^n\omega^{w_1s_1}\right)
\]
and since $w_1\not\equiv 0\bmod n$, we may re-write the inner sum using \eqref{id}:
\begin{align*}
    &\frac{1}{t!}\sum_{\substack{1\leq s_2,\ldots,s_{t+1}\leq n\\s_j\neq s_\ell,\forall \ell\neq j}}\omega^{w_2s_2+\cdots+w_{t+1}s_{t+1}}\left(\sum_{\substack{s_1=1\\s_1\notin\{s_2,\ldots,s_{t+1}\}}}^n\omega^{w_1s_1}\right)
    =\\
    &=
    \frac{1}{t!}\sum_{\substack{1\leq s_2,\ldots,s_{t+1}\leq n\\s_j\neq s_\ell,\forall \ell\neq j}}\omega^{w_2s_2+\cdots+w_{t+1}s_{t+1}}\left(-\omega^{w_1s_2}-\cdots-\omega^{w_1s_{t+1}}\right)\\
    &= - \frac{1}{t!}\sum_{\substack{1\leq s_1,\ldots,s_{t+1}\leq n\\s_j\neq s_\ell,\forall \ell\neq j}}\omega^{(w_2+w_1)s_2+w_3s_3+\cdots+w_{t+1}s_{t+1}}-\cdots\\
    &-\frac{1}{t!}\sum_{\substack{1\leq s_1,\ldots,s_{t+1}\leq n\\s_j\neq s_\ell,\forall \ell\neq j}}\omega^{w_2s_2+(w_1+w_3)s_3+w_4s_4\cdots+w_{t+1}s_{t+1}}-\cdots\\
    &-\frac{1}{t!}\sum_{\substack{1\leq s_1,\ldots,s_{t+1}\leq n\\s_j\neq s_\ell,\forall \ell\neq j}}\omega^{w_2s_2+w_3s_3+\cdots+w_t s_t+(w_1+w_{t+1})s_{t+1}}.
\end{align*}
In each of the $t$ terms in the final sum, we may relabel the indices $w_1',w_2',\ldots,w_{t+1}'$ such that $w_1'=w_1+w_\ell$ for $\ell=1,\ldots,t+1$. The remaining $w_j'$ are assigned lexicographically according to what is left, that is, if $w_1'$ takes the $\ell$th spot in the list, then \[w_2'=w_2,w_3'=w_3,\ldots,w_{\ell-1}'=w_{\ell-1},w_\ell'=w_{\ell+1},\ldots,w_{t-1}'=w_{t},w_t'=w_{t+1}.\]
These relabeled terms still satisfy $\sum_{u=1}^t w_s=0$ since the original $w$ terms satisfy this relation. They also satisfy $-n < w_1' < 0$ and $0 < w_2', \cdots, w_t' < n$. This is clear for $w_j'$ with $j>1$, and also holds for $w_1'$ since we have 
\[
w_1'=w_1+w_\ell<\sum_{j=1}^{t+1}w_j=0
.\]
So we may apply the induction assumption to each of these sums to turn the last expression in the above manipulation to
\begin{align*}
    &-\frac{1}{t}\left(\frac{1}{(t-1)!}(-1)^{t-1}(t-1)! \cdot n +\frac{1}{(t-1)!}(-1)^{t-1}(t-1)! \cdot n+\cdots+\frac{1}{(t-1)!}(-1)^{t-1}(t-1)! \cdot n \right)=\\
    &=(-1)^t \cdot n.
\end{align*}
\end{proof}
This lemma is crucial in determining the coefficients we're ultimately looking for. We now present the argument for the coefficient relationships of the rank $n$ case.

Let $k\geq1$, $A\in\text{End}(V)[[z]]$ have $A_0$ similar to a Jordan block and have normal form $B$ as in Theorem \ref{normform} 
with $b_1\neq0$. Letting
\[
\zeta(z)=\sum_{m=1}^\infty a_m z^{m/n}
\]
denote the Puiseux expansion of the eigenvalues of $A$, our aim is to show that the coefficients $\{a_1,\ldots,a_s\}$ 
determine and are determined by $\{b_1,\ldots,b_s\}$ for arbitrary $1\leq s \leq kn$. 
More precisely, writing 
\begin{equation}\label{eq:snlt}
  s=n\ell-t 
\end{equation}
for a unique $1\leq\ell\leq k, 0\leq t\leq n-1$, we have the following. 
\begin{theorem}\label{eigvals}
With the above assumptions, there exist polynomials $P_{s,n}\in \CC[x_1,\ldots,x_{s-1}]$ only depending on $s,n$ such that we have
\[
b_{s}=(-1)^t na_1^t a_{s}+P_{s,n}(a_1,\ldots,a_{s-1}). 
\]
Conversely, there exist rational functions of the form $Q_{s,n}\in \CC[x_1^{\pm 1},\ldots,x_{s-1}]$ 
such that 
\[
  a_{s}=\frac{(-1)^{s}}{n}b_1^{-s/n}b_{s}+Q_{s,n}(b_1^{1/n},\ldots,b_{s-1}).
\]
In particular, for any given $A\in\text{End}(V)[[z]]$ and fixed $k$, the parameters $\{b_1,\ldots,b_{kn}\}$ appearing in Theorem 
\ref{normform} are uniquely determined. 
\end{theorem}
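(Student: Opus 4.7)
The plan is to compute the characteristic polynomial of $B(z)$ in two different ways---directly from the shape of the normal form, and via the Newton--Puiseux factorization of Theorem \ref{thm:NP}---and to match coefficients. Since the only nonzero entries of $B_0,B_1,\ldots,B_k$ outside the Jordan superdiagonal live in the bottom row, the truncation of $B(z)$ modulo $z^{k+1}$ is a companion-type matrix whose $j$-th bottom-row entry is $\sum_{\ell=1}^{k} b_{n(\ell-1)+j}\,z^\ell$. The standard formula for the characteristic polynomial of a companion matrix then gives
\[
\chi_{B(z)}(\zeta)=\zeta^n+\gamma_1(z)\zeta^{n-1}+\cdots+\gamma_n(z)+O(z^{k+1}),
\]
with $\gamma_{t+1}(z)=-\sum_{\ell=1}^{k} b_{n\ell-t}\,z^{\ell}+O(z^{k+1})$ for $0\leq t\leq n-1$. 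Thus $b_{s}$ with $s=n\ell-t$ is, up to sign, the coefficient of $z^\ell$ in $\gamma_{t+1}(z)$, and the question reduces to extracting this coefficient from the Puiseux side.

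On the Puiseux side, Theorem \ref{thm:NP} together with \eqref{eq:roots} give $\gamma_{t+1}=(-1)^{t+1} e_{t+1}(\zeta_0,\ldots,\zeta_{n-1})$, where $e_{t+1}$ is the $(t+1)$-th elementary symmetric polynomial of the $\zeta_i(w)=\sum_{m\geq 1}a_m\omega^{im}w^m$. Expanding each factor and isolating the coefficient of $w^{n\ell}$ rewrites $b_s$ as a double sum, indexed by ordered $(t+1)$-tuples of distinct indices $(i_0,\ldots,i_t)\in\{0,\ldots,n-1\}^{t+1}$ and ordered compositions $(m_0,\ldots,m_t)$ of $n\ell$ into positive parts, the summand being $\prod_j a_{m_j}\cdot\omega^{\sum_j i_j m_j}/(t+1)!$. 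This is the point at which Lemma \ref{lemma} will be invoked, to evaluate the inner sums of roots of unity.

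The crucial combinatorial observation is that every composition $(m_0,\ldots,m_t)$ of $n\ell$ into $t+1$ positive parts satisfies $\max_j m_j\leq n\ell-t=s$, with equality precisely when one $m_j$ equals $s$ and the remaining $t$ parts equal $1$. Hence $a_s$ appears exclusively in compositions of this ``principal'' type, whereas all other compositions involve only $a_1,\ldots,a_{s-1}$ and collect into the asserted polynomial $P_{s,n}$. For the principal compositions, reducing $s\equiv-t\pmod n$ and applying Lemma \ref{lemma} with $w_1\equiv-t\pmod n$ and $w_2=\cdots=w_{t+1}=1$ (so $\sum w_j=0$ and $-n<w_1<0$ in the range $1\leq t\leq n-1$) evaluates the inner sum, and after accounting for the $t+1$ positions at which the large entry $s$ can sit, the resulting coefficient of $a_s$ in $b_s$ comes out to $\pm n\,a_1^{t}$. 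The boundary case $t=0$ (where $e_1=\sum_i\zeta_i$ and the only composition is $m_0=n\ell$) is handled directly via \eqref{id} without invoking the lemma.

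For the converse, I would proceed by strong induction on $s$. The $s=1$ relation takes the form $b_1=\pm a_1^n$ plus a constant, so $a_1$ is determined as a scalar multiple of a chosen branch $b_1^{1/n}$. For general $s$, the forward principal relation is linear in $a_s$ with leading coefficient a nonzero scalar multiple of $a_1^{t}$, hence invertible; combined with the inductive hypothesis expressing $a_1,\ldots,a_{s-1}$ as rational functions in $b_1^{1/n},b_2,\ldots,b_{s-1}$, this yields the asserted formula for $a_s$. Uniqueness of $\{b_1,\ldots,b_{kn}\}$ for fixed $A$ then follows immediately from the forward direction, since $A$ determines its Puiseux coefficients uniquely. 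The main obstacle is the third step: carefully tracking the sign conventions and combinatorial multiplicities through the applications of Lemma \ref{lemma}, both to pin down the precise leading coefficient and to verify that the non-principal terms organize into a polynomial in $a_1,\ldots,a_{s-1}$ alone.
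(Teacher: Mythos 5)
Your proposal follows essentially the same route as the paper: compare the companion-form expansion of $\chi_{B(z)}$ with the Newton--Puiseux factorization, observe that $a_s$ occurs only in the ``principal'' composition $(s,1,\ldots,1)$ of $n\ell$ into $t+1$ positive parts, evaluate the associated root-of-unity sum via Lemma~\ref{lemma} with $w_1=-t$, $w_2=\cdots=w_{t+1}=1$, and invert by strong induction on $s$ using $a_1\neq 0$. The only differences are bookkeeping (ordered compositions with a $1/(t+1)!$ factor versus the paper's partitions distributed among positions, and your explicit handling of $t=0$), and you leave the leading sign as $\pm n a_1^t$ where the theorem requires pinning it down to $(-1)^t n a_1^t$; otherwise the argument is the paper's.
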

\begin{proof}
Let $\omega$ be a primitive $n$th root of unity and recall our notation (\ref{eq:roots}) for the eigenvalues of $A$. 
The key idea is to compare two different representations for the characteristic polynomial $\chi_{B(z)}(\zeta) = \chi_{A(z)}(\zeta)$. 
Namely, up to order $k$ with respect to the variable $z$ the polynomial $\chi_{B(z)}$ can be read off directly from the form 
of the matrices $B_0, B_1, \ldots ,B_k$ given in Theorem \ref{normform}. 
On the other hand, as we have seen in Theorem \ref{thm:NP} we may expand $\chi_{A(z)}$ into linear factors $(\zeta-\zeta_i(z))$. 
This provides us the identity 
\begin{align}
&\zeta^n+\zeta^{n-1}\left( \sum_{\ell=1}^k b_{n\ell}z^\ell + O(z^{k+1}) \right) + \cdots + 
      \left( \sum_{\ell=1}^k b_{n\ell-(n-1)}z^\ell + O(z^{k+1}) \right)\label{LHS}\\
&=\prod_{i=0}^{n-1}\left(\zeta-\zeta_i(z)\right)\nonumber\\
&=\left( \zeta-\sum_{m=1}^\infty a_mz^{m/n} \right) \left( \zeta-\sum_{m=1}^\infty a_m(\omega z^{1/n})^{m} \right) 
  \cdots \left( \zeta-\sum_{m=1}^\infty a_m(\omega^{n-1} z^{1/n})^{m} \right)\label{RHS}.
\end{align}
The generic term of \eqref{LHS} is 
\[
 \zeta^{n-1-t} \left(\sum_{\ell=1}^k b_{n\ell-t}z^\ell + O(z^{k+1}) \right).
\]
We proceed now by comparing coefficients of \eqref{LHS} and \eqref{RHS}, and to do this we apply induction on $s$. 

Before starting the induction, we do some preliminary work in computing the coefficient in \eqref{RHS} of $\zeta^{n-1-t}z^\ell$, 
that is, the coefficient that corresponds to $b_{n\ell-t}$ in \eqref{LHS}. 
We exclude the case where $\ell=1,t=n-1$ (i.e. $b_1$), since this first nonzero term has simpler combinatorial structure than 
subsequent ones. 
We would like to have a general form for the subsequent terms. 

To this end, we know that the coefficient of $\zeta^{n-1-t}z^\ell$ in \eqref{RHS} will be a complex linear combination of the 
products $a_{m_1}\cdots a_{m_{t+1}}$ such that $\sum_{i=1}^{t+1}m_i=n\ell$, with constants given in terms of a sum of powers 
of $\omega$. This is equivalent to noticing that the indices $m_i$ partition $n\ell$ into $t+1$ nonempty parts. 
To explain why there are $t+1$ parts, we first see that $n-1-t=n-(t+1)$, and in the expansion \eqref{RHS}, 
each term will have $n$ components. 
These components are formed by picking one term from each of the $n$ factors in \eqref{RHS}, and are thus split into those 
that are just $\zeta$'s and those that come from the $a_i$'s. 
In the particular case of $\zeta^{n-1-t}$ we can imagine that we use $n-1-t$ choices on $\zeta$'s, 
and the remaining $t+1$ choices on various $a_{m_i}$'s. 
The correct coefficient in \eqref{RHS} to compare to $b_{n\ell-t}$ will be then those combinations of $a_{m_i}$'s such that 
the indices $m_i$ sum to $n$ times the exponent of $z$ multiplying $b_{n\ell-t}$, that is, the $m_i$ sum to $n\ell$. 
We see that the parts must be nonempty since any $m_i=0$ would give us a factor of $a_0=0$ in the product of all $a_{m_i}$'s, 
thus annihilating the product.

So we need to consider the set of all partitions of the integer $n\ell$ as a sum of $t+1$ positive integers say in decreasing order: 
\[ 
 \mathcal{P}_{\ell, t} = \{ m_1\geq \ldots \geq m_{t+1} \geq 1 | \quad m_1+\cdots+m_{t+1}=n\ell \}.
\]
With this notation we can produce an initial expression for the general coefficient: 
\begin{equation}
   b_{n\ell-t} = \sum_{\mathcal{P}_{\ell, t}}
   a_{m_1}\cdots a_{m_{t+1}}\mu_{m_1,\ldots,m_{t+1}},
    \label{init}
\end{equation}
where $\mu_{m_1,\ldots,m_{t+1}}$ denotes a yet undetermined linear combination of powers of $\omega$ with rational coefficients 
that depends on the partition $( m_1, \ldots , m_{t+1} )$.

The expression in \eqref{init} can be refined by noticing that we care only about the partitions with $m_1=n\ell-t = s$, 
since this will be the highest possible index for a given $s$ and given $t$, and the products coming from all partitions with 
$m_1 < n\ell-t$ will be absorbed in the polynomial $P_{s,n}(a_1^{1/n},\ldots,a_{s-1})$. 
This assignment of $m_1$ then necessarily forces $m_2=\cdots=m_{t+1}=1$, since we still require that the partition contains 
$t+1$ nonempty parts and that the $m_i$ sum to $n\ell$. 
Let us now introduce 
\[ 
   \mathcal{P}^0_{\ell, t} = \{ ( m_1, \ldots , m_{t+1} ) \in \mathcal{P}_{\ell, t} | \quad n\ell-t > m_1 \}. 
\]
This $\mathcal{P}^0_{\ell, t}$ captures all of the partitions whose $m_1$ index we do not need to keep track of, allowing us to re-write \eqref{init}. In re-writing we suppress the $m_i$ in the first term, instead presenting their actual values which we know to be 
$m_1=n\ell-t,m_2=\cdots=m_{t+1}=1$:
\begin{equation}
    a_1^t a_{n\ell-t}\mu_{n\ell-t,1,\ldots,1}+
    \sum_{\mathcal{P}^0_{\ell, t}}
      a_{m_1}\cdots a_{m_{t+1}}\mu_{m_1,\ldots,m_{t+1}}
    \label{sep}.
\end{equation}
Again, as the indices $m_i$ of each term in the sum are all strictly less than $n\ell-t$, the second term in this formula 
only contributes to $P_{s,n}$, hence we only need to specify the constants $\mu_{n\ell-t,1,\ldots,1}$. 

To gain a better understanding of the structure of the constant $\mu_{n\ell-t,1,\ldots,1}$ appearing in the above expression 
we describe a way of visualizing each partition that will give more structure to the enumeration of the constant's summands. 
Consider the partition of $n\ell$ into parts $n\ell-t,1,\ldots,1$ with $1$ appearing $t$ times. 
We align this partition with the combinatorial choice of picking a term out of each of the $n$ factors of \eqref{RHS} 
by considering the $m_i$ to be distributed among $n$ boxes, not necessarily in increasing order. 
We label the positions of these $m_i$ amongst the $n$ boxes by the labels $s_i$ for $i=1,\ldots,t+1$, such that 
$s_i\neq s_j$ for $i\neq j$. 
Observe however that since $m_2 = \cdots = m_{t+1}$, any fixed set $\{ s_2 , \ldots , s_{t+1} \}$ 
of $t$ distinct positions in $\{ 1, \ldots ,n \}$ and any further position $s_1 \notin \{ s_2 , \ldots , s_{t+1} \}$ 
give rise to a \emph{single} term in \eqref{sep} of the form $\omega^{s}a_1^t a_{n\ell - t}$ for some integer $s$ (to be specified below), 
independently of the order of $\{ s_2 , \ldots , s_{t+1} \}$. 
So we may (and from now on, will) assume that the positions $\{ s_2 , \ldots , s_{t+1} \}$ are in increasing order 
\[
 s_2 < \cdots < s_{t+1}; 
\]
however, we have no restriction about the position of $s_1$ relative to the above increasing sequence. 
This gives us a way of picturing all possible configurations of the $m_i$. 
An example of one of these configurations is as follows:
\begin{center}
\ytableausetup
{mathmode, boxsize=2em}
\begin{ytableau}
\zeta & \zeta & x_2 &\none[\dots] & \zeta & x_j & \zeta & \none[\dots] & \zeta & x_1 &\zeta & \none[\dots] & x_{t} & x_{t+1} &\zeta & \none[\dots] & \zeta & \zeta\\
\none[1] & \none[2] & \none[s_2] &\none[\dots] & \none[] & \none[s_j] & \none[] & \none[\dots] & \none[] & \none[s_1] & \none[] &\none[\dots] & \none[s_t] &\none[s_{t+1}] &\none[] &\none[\dots]& \none[n-1]&\none[n]
\end{ytableau},
\end{center} 
with $x_j=-a_{m_j}(\omega^{s_j-1}z)^{m_j}$ for all $1\leq j\leq t+1$. 
We note that the $-1$ attached to each $s_i$ in the exponents occurs since the expansion in \eqref{RHS} is indexed from 0 to $n-1$, 
but we were considering the $s_i$ as elements of $\{1,\ldots,n\}$. This is a minor adjustment. 

Computing $\mu_{m_1,\ldots,m_{t+1}}$ involves writing an expression for $\mu$ that reflects the fixing of $s_1$, 
the position of $m_1$, outside of the strict ordering of the other labels. 
We express this now, adopting the standard notation $[n]=\{1,\ldots,n\}$:
\begin{equation}
    \mu_{n\ell-t,1,\ldots,1}=\sum_{\substack{s_2,\ldots,s_{t+1}\in\ZZ^+\\1\leq s_2<\cdots<s_{t+1}\leq n}}\omega^{(s_2-1)}\cdots\omega^{(s_{t+1}-1)}
    \sum_{\substack{s_1\in[n]\setminus\\ \{s_2,\ldots,s_{t+1}\}}}\omega^{(s_1-1)(n\ell-t)}
    \label{omegasplit}.
\end{equation}
Now we manipulate \eqref{omegasplit} as follows, recognizing that since $\omega$ is an $n$th root of unity, 
we may work with any of the sums in the exponents modulo $n$:
\begin{align}
    &\sum_{\substack{s_2,\ldots,s_{t+1}\in\ZZ^+\\1\leq s_2<\cdots<s_{t+1}\leq n}}
    \sum_{\substack{s_1\in[n]\setminus\\ \{s_2,\ldots,s_{t+1}\}}}\omega^{s_2+\cdots+s_{t+1}-t+s_1\ell n-s_1t-\ell n+t}\nonumber\\
    &=\sum_{\substack{s_2,\ldots,s_{t+1}\in\ZZ^+\\1\leq s_2<\cdots<s_{t+1}\leq n}}
    \sum_{\substack{s_1\in[n]\setminus\\ \{s_2,\ldots,s_{t+1}\}}}\omega^{s_2+\cdots+s_{t+1}-s_1t}\nonumber\\
    &=\sum_{\substack{s_2,\ldots,s_{t+1}\in\ZZ^+\\1\leq s_2<\cdots<s_{t+1}\leq n}}\omega^{s_2+\cdots+s_{t+1}}
    \sum_{\substack{s_1\in[n]\setminus\\ \{s_2,\ldots,s_{t+1}\}}}\omega^{-s_1t}\label{orderedsum}.
\end{align}
We may recognize \eqref{orderedsum} as an ordered version of the sum examined by Lemma~\ref{lemma}. 
Indeed, we have bounded weights that sum to zero and an exponent sum in $t+1$ terms, namely $w_1 = -t, w_2 = \cdots = w_{t+1} = 1$. 
In Lemma~\ref{lemma} we have $t+1$ unordered terms, but here we have $t$ ordered terms and one independent term. 
Multiplying \eqref{orderedsum} by $t!$ allows us to re-write it without the ordering, and allows us to apply the lemma, 
since we obtain sums over $t+1$ unordered terms. 
But then the lemma gives that dividing by $t!$ again allows us to compute the sum, and so the sum from the lemma and the sum in 
\eqref{orderedsum} are equivalent. So we find 
\[
\sum_{\substack{s_2,\ldots,s_{t+1}\in\ZZ^+\\1\leq s_2<\cdots<s_{t+1}\leq n}}\omega^{s_2+\cdots+s_{t+1}}
    \sum_{\substack{s_1\in[n]\setminus\\ \{s_2,\ldots,s_{t+1}\}}}\omega^{-s_1t}=(-1)^t n.
\]
We conclude that the leading-index term for $b_{n\ell-t}$ is $(-1)^t na_1^t a_{n\ell-t}$.

Now we can start the induction on $s$, which will actually be a double induction, first on $\ell \in \{ 1, \ldots , k \}$ 
in increasing order then on $t \in \{ 0 , \ldots , n-1 \}$ in decreasing order, see (\ref{eq:snlt}). 
We determine $b_1$ by inspection, and apply the above argument for $b_2,\ldots,b_n$. So we have
\begin{align*}
    b_1&=a_1^n\\
    b_2&=(-1)^{n-2}na_1^{n-2}a_2\\
    &\vdots\\
    b_p&=(-1)^{n-p}na_1^{n-p}a_p\\
    &\vdots\\
    b_n&=na_n.
\end{align*}
We note that each of these $b_i$ relations matches that in the theorem statement, depending on $a_1$ and  $a_i$. These relationships are certainly invertible in terms of the $a_i$:
\begin{align*}
    a_1&=\sqrt[n]{b_1}\\
    a_2&=\frac{(-1)^{2-n}b_2}{nb_1^{1-2/n}}\\
    &\vdots\\
    a_p&=\frac{(-1)^{p-n}b_p}{nb_1^{1-p/n}}\\
    &\vdots\\
    a_n&=b_n/n.
\end{align*}
We fix an $n$'th root of $b_1$ here so that everything is uniquely determined. 
Changing the choice of the root is equivalent to multiplying $a_1$ by a primitive $n$'th root of unity, which then affects all 
subsequent coefficients $a_k$ in the same way, eventually leading to a permutation of the roots $\zeta_j(z)$ in \eqref{RHS}; 
thus, fixing an $n$'th root of $b_1$ is not a restrictive choice. 
Furthermore, we note that in one direction we have the desired polynomial relations, and in the other direction we have the desired rational relations. Thus, the statement holds for $\ell = 1$ and all $t$. 

Then supposing that the claim holds for $2,\ldots,s-1$, we consider general $s$. 
From the earlier partition argument we also know that any terms $a_i$ in the full expression for $b_{s}$ that do not contain $a_{n\ell-t}$ will have indices at most $i\leq n\ell-t-1=s-1$, so applying the induction hypothesis gives that we have
\[
b_{n\ell-t}=(-1)^t na_1^t a_{n\ell-t}+\text{P}_{s,n}(a_1,\ldots,a_{s-1}).
\]
Since we have invertible relationships for the expressions contained in \\$\text{P}_{s,n}(a_1,\ldots,a_{s-1})$. This new set of relationships will also be invertible since the only new term is $(-1)^t na_1^t a_{n\ell-t}$, which is a nonzero multiple of $a_{n\ell-t}$ since we are working over a field of characteristic zero with $a_1\neq0$. So $b_{n\ell-t}$ is determined explicitly by this expression, and vice versa. Thus we have shown that the claim holds for general $s$.
\end{proof}

\bibliography{bibliography}
\bibliographystyle{amsplain}
\end{document}